\newtheorem{theorem}{Theorem}[section]
\newtheorem{lemma}[theorem]{Lemma}
\newtheorem{corollary}[theorem]{Corollary}
\theoremstyle{definition}
\newtheorem{definition}[theorem]{Definition}
\newtheorem{example}[theorem]{Example}
\newtheorem{algorithm}[theorem]{Algorithm}
\newtheorem{remark}[theorem]{Remark}
\newcommand{\mydef}[1]{{\color{blue} #1}}
\newcommand{\worry}[1]{{\bf \color{red} #1}}
\renewcommand{\worry}[1]{}
\renewcommand{\c}{{\bf c}}
\title{Necklaces count polynomial parametric osculants}
\author{Taylor Brysiewicz}
\address{Department of Mathematics, Texas A\&M University, 
College Station, TX 77840}
\email{tbrysiewicz@math.tamu.edu}
\urladdr{www.math.tamu.edu/$\sim$tbrysiewicz} 
\begin{document}
\newcommand{\ehr}{\text{ehr}}
\newcommand{\C}{\mathbb{C}}
\newcommand{\R}{\mathbb{R}}
\newcommand{\Jac}{\text{Jac}}
\newcommand{\Tor}{{\bf Tor}}
\renewcommand{\L}{\mathcal{L}}
\renewcommand{\dim}{\text{dim}}
\renewcommand{\a}{{\bf a}}
\newcommand{\la}{\langle}
\newcommand{\ra}{\rangle}
\newcommand{\V}{{\bf V}}
\newcommand{\MV}{\mathcal{MV}}
\newcommand{\vol}{{\bf vol}}
\newcommand{\Newt}{\text{Newt}}
\renewcommand{\d}{\textbf{d}}
\renewcommand{\P}{\mathbb{P}}
\newcommand{\0}{{\bf 0}}
\newcommand{\N}{\mathbb{N}}
\newcommand{\x}{{\bf x}}
\begin{abstract} 
We consider the problem of geometrically approximating a complex analytic curve in the plane by the image of a polynomial parametrization $t \mapsto (x_1(t),x_2(t))$ of bidegree $(d_1,d_2)$.  We show the number of such curves is the number of primitive necklaces on $d_1$ white beads and $d_2$ black beads. We show that this number is odd when $d_1=d_2$ is squarefree and use this to give a partial solution to a conjecture by Rababah. Our results naturally extend to a generalization regarding hypersurfaces in higher dimensions. There, the number of parametrized curves of multidegree $(d_1,\ldots,d_n)$ which optimally osculate a given hypersurface are counted by the number of primitive necklaces with $d_i$ beads of color $i$.
\end{abstract}
\maketitle
\begin{section}{Introduction}
Given a generic complex analytic curve $\mathcal C\subset \mathbb{C}^2$ through the origin defined locally by the graph of $g(x_1)=\sum_{i=1}^\infty c_i x_1^i$ it  is a common task to approximate $\mathcal C$ at the origin by a member of a simpler family of curves. The family we consider are curves which arise as the image of some polynomial map 
$$
\x(t):\mathbb{C} \to \mathbb{C}^2 $$
$$
t \mapsto (x_1(t),x_2(t))$$of bidegree $\d:=(d_1,d_2)$ and the notion of approximation we use is the degree of vanishing of the univariate power series $f(x_1,x_2)=x_2(t)-g(x_1(t))$ at $t=0$. We call $\x(t)$ a $k$-fold $\d$-parametrization when it is generically $k$-to-one. Up to reparametrization, there are finitely many $k$-fold $\d$-parametrizations which meet $\mathcal C$ to the expected maximal approximation order $d_1+d_2$ (Corollary \ref{Cor:FinitelyManySolutions}). Images of such parametrizations are curves of bidegree $\frac{\d}{k}:=\left(\frac{d_1}{k},\frac{d_2}{k}\right)$ and are called $\frac{\d}{k}$-interpolants. Because $d_1+d_2$ is the maximal approximation order attainable by a $(d_1,d_2)$-parametrization, we may assume that $f(x_1,x_2)$ is a polynomial in $\C[x_1,x_2]$ by truncating higher order terms. We show that the number of $\d$-interpolants of a generic curve is the number of primitive necklaces on $d_1$ white beads and $d_2$ black beads (Corollary \ref{Cor:PlaneResult}).

When $f \in \mathbb{R}[x_1,x_2]$, and $d_1=d_2$, Rababah conjectured that there exists at least one real $\d$-interpolant \cite{Rababah1993}. A similar conjecture was made by H\"ollig and Koch which includes the case when interpolation points are distinct and also conjectures that local approximation of a curve at a point occurs as the limit of the interpolation of distinct points on that curve \cite{HolligKoch}. The cubic case was resolved and analyzed thoroughly by DeBoor, Sabin, and H\"ollig \cite{Boor1987}. Scherer showed that there are eight $(4,4)$-interpolants and investigated bounds on the number of those which are real \cite{Scherer2003}. 
For a family of curves known as ``circle-like curves'', Rababah's conjecture has been resolved for all $d_1=d_2$ \cite{JKKZ} as well as for generic curves up to $d_1=d_2 \leq 5$  \cite{JKKZmathcomp}. By enumerating interpolants and recognizing them as solutions to polynomial systems we approach Rababah's conjecture combinatorially. We show that when $d_1=d_2$ is squarefree, a real interpolant exists for parity reasons (Theorem \ref{Thm:RealSquareFree}).
 Computations done in Section \ref{Section:Computation} provide evidence for Rababah's conjecture and also suggest that the number of real solutions has interesting lower bounds and upper bounds.

Producing a parametric description of a plane curve with particular derivatives at a point is a useful tool in Computer Aided Geometric Design particularly because these curves can achieve a much higher approximation order than Taylor approximants. For example, a quintic Taylor approximant can meet a generic curve only to order $6$ while a $(5,5)$-interpolant will meet to order $10$. Such applications do not have any preference for the behavior of the interpolating curve near infinity and so only the cases when $d_1=d_2$ have been considered. The more general problem of finding a polynomial parametrization of multidegree $\d:=(d_1,\ldots,d_n)$ osculating a hypersurface in $\mathbb{C}^n$ to approximation order $|\d|:=\sum d_i$ places the original problem into a broader theoretical context. This does not complicate the notation or proofs and so all arguments are made in the general setting. We reserve the word $\d$-interpolant for the case $n=2$ and otherwise we call these objects $\d$-osculants. 
\begin{theorem}
\label{Thm:MainTheorem}
Let $\mathcal H \subset \mathbb{C}^n$ be a generic hypersurface through $\0$. The number of $\d$-osculants of $\mathcal H$ is equal to the number of primitive $\d$-necklaces.
\end{theorem}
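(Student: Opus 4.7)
The plan is to recast the osculation condition as a quasi-homogeneous polynomial system whose $\C^*$-orbits correspond to the parametrizations sought, and then to identify the orbit count with the necklace count via a fold-stratification and Möbius inversion. Writing $x_i(t) = \sum_{j=1}^{d_i} a_{i,j} t^j$ and letting $h$ be a local defining equation of $\mathcal H$ with $h(\0) = 0$, the osculation condition $h(\x(t)) \equiv 0 \pmod{t^{|\d|}}$ yields $|\d|$ polynomial equations $E_m := [t^m]\, h(\x(t)) = 0$ in the $|\d|$ coefficients $a_{i,j}$; the equation $E_0$ is automatic since $h(\0)=0$. Giving $a_{i,j}$ the weight $j$ makes each $E_m$ quasi-homogeneous of degree $m$ for the $\C^*$-reparametrization $t \mapsto \alpha t$, so the solution set in $\C^{|\d|}$ is a union of $\C^*$-orbits, and the theorem counts the primitive (i.e., 1-fold) orbits.

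For the fold-stratification, observe that a $k$-fold $\d$-parametrization osculating $\mathcal H$ to order $|\d|$ factors as $\x(t) = \mathbf{y}(\psi(t))$ for some primitive $(\d/k)$-osculant $\mathbf{y}$ of $\mathcal H$ (unique up to $\C^*$-reparametrization on its source, by Lüroth) and some polynomial $\psi\colon\C\to\C$ of degree $k$ with $\psi(0) = 0$; the mismatch between the osculation order $|\d|$ required of $\x$ and the maximum order $|\d|/k$ achievable by $\mathbf{y}$ forces $\psi$ to vanish to order exactly $k$ at $0$, so $\psi(t) = \beta t^k$ for some $\beta\in\C^*$. Modulo $\C^*$-reparametrization on the source of $\x$, all such $\beta$'s are equivalent (the substitution $t\mapsto\alpha t$ sends $\beta$ to $\beta\alpha^k$, which is surjective on $\C^*$), hence the number of $k$-fold orbits equals the number $P(\d/k)$ of primitive $(\d/k)$-osculants; summing over $k\mid\gcd(\d)$ yields
$$N(\d) := \sum_{k\mid\gcd(\d)} P(\d/k).$$

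The main step is an independent computation of $N(\d)$ identifying it with the total number of $\d$-necklaces, $\frac{1}{|\d|}\sum_{k\mid\gcd(\d)}\phi(k)\binom{|\d|/k}{\d/k}$; Möbius inversion on the divisor lattice of $\gcd(\d)$ then extracts $P(\d) = \frac{1}{|\d|}\sum_{k\mid\gcd(\d)}\mu(k)\binom{|\d|/k}{\d/k}$, which is the number of primitive $\d$-necklaces by Moreau's formula. To compute $N(\d)$ I would apply the Bernstein--Kushnirenko theorem to $E_1,\ldots,E_{|\d|-1}$ after fixing a $\C^*$-normalization slice, identifying the Newton polytopes of the $E_m$. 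This mixed-volume calculation is the main obstacle: the polytopes' lattice points encode $n$-colored compositions of integers, and one must show that the mixed volume evaluates to the Burnside-average sum above. A plausible route is a toric or monomial degeneration of $\mathcal H$ under which the osculants become indexed by words of content $\d$ and the residual $\C^*$-action acts as cyclic rotation, so that the fold of an osculant corresponds to the rotational period of its associated word, matching the geometric orbit stratification to the combinatorial one.
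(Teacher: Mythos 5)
Your outer framework is sound and parallels the paper's closing argument: stratify the solutions by fold, observe that a $k$-fold osculating parametrization is a primitive $\frac{\d}{k}$-parametrization precomposed with $t\mapsto \beta t^k$, and extract the primitive count from a divisor-sum recursion that characterizes the primitive necklace numbers (the paper uses the uniqueness statement of its Lemma \ref{Lemma:Recursion} for exactly this purpose). But the proof has a genuine gap precisely where you flag ``the main obstacle'': you never actually establish the total count $N(\d)=\mathcal M_\d$ (equivalently, that the normalized system has $\binom{|\d|}{\d}$ distinct parametrizations as solutions). The Bernstein--Kushnirenko route is not a routine fix: the coefficients of your equations $E_m$ are not independent generic constants but specific polynomials in the $c_I$ determined by the composition $h(\x(t))$, so the mixed volume is only an upper bound a priori, and you would still owe both the computation of that mixed volume and an argument that the bound is attained for generic $\mathcal H$. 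Without this step the recursion has no left-hand side and the theorem does not follow.

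The paper closes this gap with the degeneration you only gesture at in your last sentence, and the details are worth internalizing because they are where all the work lives. One changes coordinates to the roots, $x_i(t)=\prod_{j=1}^{d_i}(\alpha_{i,j}t+1)-1$, and specializes $\mathcal H$ to $\tilde{\mathcal H}=V\left(\prod_{i=1}^n(x_i+1)-1\right)$. The osculation condition then reads $\prod_{i,j}(\alpha_{i,j}t+1)=1+(zt)^{|\d|}$, i.e.\ the elementary symmetric polynomials of the $\alpha_{i,j}$ in degrees $1,\dots,|\d|-1$ vanish; the solutions are exactly the $|\d|!$ assignments of the $|\d|$-th roots of $-1$ to the $\alpha_{i,j}$, this matches the B\'ezout bound $1\cdot 2\cdots|\d|$, so every solution is simple, and semicontinuity of fiber dimension and cardinality transfers the count $|\d|!$ to the generic fiber. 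Quotienting by $S_{d_1}\times\cdots\times S_{d_n}$ (reordering roots within each $x_i$) gives $\binom{|\d|}{\d}$ parametrizations, and the residual reparametrizations fixing the normalization are the $|\d|$-th roots of unity acting by rotation --- which is exactly your ``residual $\C^*$-action acts as cyclic rotation,'' made precise. If you replace your mixed-volume step with this explicit special fiber plus a genericity-transfer argument (which your write-up also omits), your proof becomes essentially the paper's.
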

\begin{corollary}
\label{Cor:PlaneResult}
The number of $\d$-interpolants to a generic curve in the plane is given by the number of primitive $\d$-necklaces.
\end{corollary}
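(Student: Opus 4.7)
My plan is essentially a one-line deduction. Corollary \ref{Cor:PlaneResult} is the specialization of Theorem \ref{Thm:MainTheorem} to $n=2$: a hypersurface in $\mathbb{C}^2$ is a plane curve, and by the nomenclature convention fixed at the end of the introduction, the $\d$-osculants of a hypersurface in $\mathbb{C}^n$ become the $\d$-interpolants of a curve precisely when $n=2$. So I would simply invoke Theorem \ref{Thm:MainTheorem} with $n=2$, rename ``osculant'' to ``interpolant'', and read off the count as the number of primitive $\d$-necklaces.

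The only point worth articulating is that the notion of ``generic'' matches on both sides. In the introduction, a generic plane curve is one that is locally the graph of a convergent series $g(x_1) = \sum_{i \geq 1} c_i x_1^i$ whose coefficients avoid a proper Zariski-closed locus; after truncating beyond order $d_1 + d_2$, one may replace $g$ by its Taylor polynomial of that degree, and the genericity hypothesis of Theorem \ref{Thm:MainTheorem} becomes an open condition on the coefficients $c_1, \ldots, c_{d_1+d_2}$. I would record that a generic plane curve through $\0$ satisfies this condition, so the hypothesis of the theorem is met.

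I do not anticipate any obstacle, since all of the content lives in Theorem \ref{Thm:MainTheorem}. The corollary is recorded chiefly to make the $n=2$ case, which was the original motivation coming from Computer Aided Geometric Design and from Rababah's conjecture, visible and quotable in the language of the introduction.
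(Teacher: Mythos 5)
Your proposal is correct and matches the paper's (implicit) treatment exactly: the corollary is the $n=2$ specialization of Theorem \ref{Thm:MainTheorem}, with ``interpolant'' being the reserved name for an osculant of a plane curve, which is why the paper states it without a separate proof. Your remark that genericity of the curve translates into genericity of the truncated coefficient vector $\hat{\c}$ is a reasonable point to make explicit, and it is consistent with how the paper sets up the parameter space $\P_\c$.
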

\begin{center}
{
\begin{table}[!htbp]
\caption{The number of primitive $\d$-necklaces (Sequence A$24558$ in the Online Encyclopedia of Integer Sequences \cite{OEIS}).}
\begin{tabular}{r c c c c c c c c c  }
\diaghead(1,-1){aaa}{$d_1$}{$d_2$}
& \vline & 1 & 2 & 3 & 4 & 5 & 6 & 7 &8  \\
\hline
1 & \vline & 1 & 1 & 1 & 1 & 1 & 1 & 1 & 1 \\
2 & \vline & 1 & {\bf 1} & 2 &2 &  3 & 3   & 4& 4\\
3 & \vline & 1 & 2 & {\bf 3} &5 & 7  &  9  &12 &15 \\
4 & \vline & 1 & 2 & 5 &{\bf 8} & 14 &  20  &30 &40 \\
5 & \vline & 1 & 3 & 7 &14& {\bf 25} & 42   &66 & 99\\
6 & \vline & 1 & 3 & 9 &20& 42 & {\bf 75}   & 132&212 \\
7 & \vline & 1 & 4 & 12&30& 66 &  132  &{\bf 245} & 429\\
8 & \vline & 1 & 4 & 15&40& 99 & 212   &429 &{\bf 800} 
\end{tabular}\label{fig:aperiodicTable}
\end{table}
}
\end{center}
\end{section}
\section*{Acknowledgements} I would like to thank Ulrich Reif for introducing me to this problem and for his help with existing literature. I would also like to express my gratitude to Frank Sottile for his support, thoughtful advice, and inspiring discussions. This project was supported by NSF grant DMS-1501370.
\begin{section}{Necklaces}
\label{Section:Necklaces}
Let $\d:=(d_1,\ldots,d_n) \in \mathbb{N}^n$. A \mydef{$\d$-necklace} is a circular arrangement of $d_i$ beads of color $i$ modulo cyclic rotation. A $\d$-necklace is called \mydef{$k$-fold} if it has $\frac{|\d|}{k}$ elements in its orbit under rotation and a $1$-fold necklace is called \mydef{primitive}. We denote the number of $k$-fold $\d$-necklaces by \mydef{$\mathcal N_{\d,k}$} and we let $\mydef{\mathcal{M}_{\d}}$ be the total number of $\d$-necklaces. Figure \ref{Fig:necklaces} displays the four $(3,3)$-necklaces, the first three of which are primitive while the last is $3$-fold. Figure \ref{Fig:Trinecklaces} displays the two $(1,1,1)$-necklaces, which are both primitive.
\begin{center}
\begin{figure}[!htbp]
\caption{All $(3,3)$-necklaces.}
  \begin{tikzpicture}
\draw[black] (1:1) arc (0:360:10mm);
    \node[state,fill=black,minimum size=0.5cm] at (360/6 * 1:1cm) {};
    \node[state,fill=black,minimum size=0.5cm] at (360/6 * 2:1cm) {};
    \node[state,fill=black,minimum size=0.5cm] at (360/6 * 3:1cm) {};
    \node[state,fill=white,minimum size=0.5cm] at (360/6 * 4:1cm) {};
    \node[state,fill=white,minimum size=0.5cm] at (360/6 * 5:1cm) {};
    \node[state,fill=white,minimum size=0.5cm] at (360/6 * 6:1cm) {};
   \end{tikzpicture}
  \begin{tikzpicture}
\draw[black] (1:1) arc (0:360:10mm);
    \node[state,fill=black,minimum size=0.5cm] at (360/6 * 1:1cm) {};
    \node[state,fill=black,minimum size=0.5cm] at (360/6 * 2:1cm) {};
    \node[state,fill=white,minimum size=0.5cm] at (360/6 * 3:1cm) {};
    \node[state,fill=black,minimum size=0.5cm] at (360/6 * 4:1cm) {};
    \node[state,fill=white,minimum size=0.5cm] at (360/6 * 5:1cm) {};
    \node[state,fill=white,minimum size=0.5cm] at (360/6 * 6:1cm) {};
   \end{tikzpicture}
  \begin{tikzpicture}
\draw[black] (1:1) arc (0:360:10mm);
    \node[state,fill=black,minimum size=0.5cm] at (360/6 * 1:1cm) {};
    \node[state,fill=black,minimum size=0.5cm] at (360/6 * 2:1cm) {};
    \node[state,fill=white,minimum size=0.5cm] at (360/6 * 3:1cm) {};
    \node[state,fill=white,minimum size=0.5cm] at (360/6 * 4:1cm) {};
    \node[state,fill=black,minimum size=0.5cm] at (360/6 * 5:1cm) {};
    \node[state,fill=white,minimum size=0.5cm] at (360/6 * 6:1cm) {};
   \end{tikzpicture}
   \hspace{0.1 cm}
   \vline
   \hspace{0.1 cm}
  \begin{tikzpicture}
\draw[black] (1:1) arc (0:360:10mm);
    \node[state,fill=black,minimum size=0.5cm] at (360/6 * 1:1cm) {};
    \node[state,fill=white,minimum size=0.5cm] at (360/6 * 2:1cm) {};
    \node[state,fill=black,minimum size=0.5cm] at (360/6 * 3:1cm) {};
    \node[state,fill=white,minimum size=0.5cm] at (360/6 * 4:1cm) {};
    \node[state,fill=black,minimum size=0.5cm] at (360/6 * 5:1cm) {};
    \node[state,fill=white,minimum size=0.5cm] at (360/6 * 6:1cm) {};
   \end{tikzpicture}
\label{Fig:necklaces}
\end{figure}
\end{center}
\begin{center}
\begin{figure}[!htbp]
\caption{All $(1,1,1)$-necklaces.}
  \begin{tikzpicture}
\draw[black] (1:1) arc (0:360:10mm);
    \node[state,fill=black,minimum size=0.5cm] at (360/3 * 1:1cm) {};
    \node[state,fill=yellow,minimum size=0.5cm] at (360/3 * 2:1cm) {};
    \node[state,fill=white,minimum size=0.5cm] at (360/3 * 3:1cm) {};
   \end{tikzpicture}
  \begin{tikzpicture}
\draw[black] (1:1) arc (0:360:10mm);
    \node[state,fill=black,minimum size=0.5cm] at (360/3 * 1:1cm) {};
    \node[state,fill=white,minimum size=0.5cm] at (360/3 * 2:1cm) {};
    \node[state,fill=yellow,minimum size=0.5cm] at (360/3 * 3:1cm) {};
   \end{tikzpicture}
\label{Fig:Trinecklaces}
\end{figure}
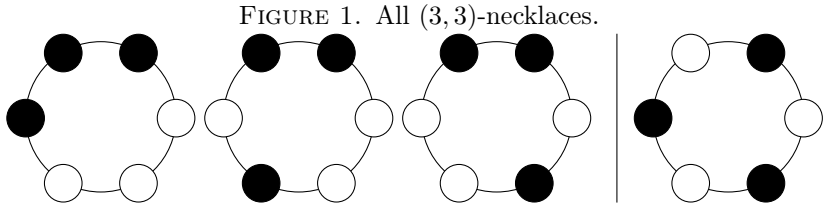
\end{center}
Observe that the number of $k$-fold $\d$-necklaces is equal to the number of primitive $\frac{\d}{k}:=\left(\frac{d_1}{k},\ldots,\frac{d_n}{k}\right)$-necklaces. This is illustrated in Figure \ref{Fig:necklaces}, where the $3$-fold necklace arises as the repetition of the only $(1,1)$-necklace, three times. This fact implies the useful formula $\mathcal{M}_{\d}=\sum\limits_{k \mid \gcd(\d)} \mathcal N_{\d,k}$.
\begin{lemma}
\label{Lemma:Recursion}
The numbers $\mathcal N_{\d,1}$ are the unique numbers satisfying the identity
$$
\binom{|\d|}{\d} =\sum_{k \mid \gcd(\d)} \frac{|\d|}{k}\mathcal N_{\frac{\d}{k},1}.$$
Where $\binom{|\d|}{\d}$ is the multinomial coefficient $\frac{|\d|!}{d_1!d_2!\cdots d_n!}$.
\end{lemma}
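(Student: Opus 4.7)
The plan is to prove the identity by a double-counting argument on the set of linear words with $d_i$ letters of color $i$, and then to extract uniqueness from the form of the identity by induction.

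For the identity itself, I would count in two ways the number of words $w_1 w_2 \cdots w_{|\d|}$ in which each color $i$ appears exactly $d_i$ times. On the one hand this number is clearly the multinomial coefficient $\binom{|\d|}{\d}$. On the other hand, each such word belongs to a unique orbit under cyclic rotation, which is a $\d$-necklace. Grouping orbits by the integer $k$ for which the necklace is $k$-fold, I would invoke the definition given just before the statement: a $k$-fold $\d$-necklace has exactly $|\d|/k$ elements in its cyclic orbit, and (as noted in the paragraph preceding the lemma) the number of $k$-fold $\d$-necklaces equals the number of primitive $\d/k$-necklaces, namely $\mathcal{N}_{\d/k,1}$. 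Summing contributions over $k \mid \gcd(\d)$ yields
\[
\binom{|\d|}{\d} \;=\; \sum_{k \mid \gcd(\d)} \frac{|\d|}{k}\, \mathcal{N}_{\d/k,1}.
\]

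For uniqueness, I would argue by induction on $|\d|$. Isolating the $k=1$ term in the identity, we may solve
\[
\mathcal{N}_{\d,1} \;=\; \frac{1}{|\d|}\!\left(\binom{|\d|}{\d} - \sum_{\substack{k \mid \gcd(\d)\\ k>1}} \frac{|\d|}{k}\, \mathcal{N}_{\d/k,1}\right),
\]
and every term $\mathcal{N}_{\d/k,1}$ with $k>1$ on the right-hand side has $|\d/k| < |\d|$. Thus the values of $\mathcal{N}_{\d,1}$ for smaller total degree determine $\mathcal{N}_{\d,1}$, establishing that the identity pins down a unique sequence.

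There is no real obstacle here; the only point that needs to be handled carefully is the correspondence between $k$-fold $\d$-necklaces and primitive $\d/k$-necklaces (obtained by collapsing the period-$|\d|/k$ repetition), together with the fact that the rotational orbit of a $k$-fold necklace really has size $|\d|/k$ and not a proper divisor thereof, so that the counts $|\d|/k \cdot \mathcal{N}_{\d/k,1}$ indeed partition the $\binom{|\d|}{\d}$ words without overlap.
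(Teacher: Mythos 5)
Your proof is correct and follows essentially the same route as the paper: counting the $\binom{|\d|}{\d}$ linear arrangements by partitioning them into cyclic orbits and using the identification of $k$-fold $\d$-necklaces with primitive $\d/k$-necklaces. Your uniqueness argument (induction on $|\d|$, solving for $\mathcal N_{\d,1}$ from the smaller terms) is in fact spelled out more completely than the paper's, which only explicitly notes the case $\gcd(\d)=1$ and relegates the general recursion to the following remark.
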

\begin{proof}
 Partitioning $\d$-necklaces into their orbit size gives the recursion 
\begin{align*}
\binom{|\d|}{\d} &= \sum_{k \mid \gcd(\d)} \frac{|\d|}{k}\mathcal N_{\d,k}\\
&=\sum_{k \mid \gcd(\d)} \frac{|\d|}{k}\mathcal N_{\frac{\d}{k},1}.
\end{align*}
To see that only one sequence satisfies this recursion, observe that when $\gcd(\d)=1$ the formula becomes 
$$\binom{|\d|}{\d} =|\d|\mathcal N_{\d,1}.$$
\end{proof}
\begin{remark}
The identity in Lemma \ref{Lemma:Recursion} induces the recursion 
$$\mathcal N_{\d,1}=\binom{|\d|}{\d} - \sum_{\substack{k \mid \gcd(\d) \\ k \neq 1}} \frac{|\d|}{k}\mathcal N_{\frac{\d}{k},1}$$
on the numbers $\mathcal N_{\d,1}$. 
\end{remark}
There are at least two natural actions on the set of necklaces: \mydef{reflection} and color swaps. 
A necklace can be reflected to produce another necklace. Those necklaces which are invariant under reflection are called \mydef{achiral}. A \mydef{color swap} is given by a permutation $\sigma \in \mathcal S_{n}$ where $\sigma$ acts on a necklace on $n$ colors by recoloring all beads colored $i$ instead by $\sigma(i)$. When the necklace only has two colors, color swapping is an involution whose fixed points are \mydef{self-complementary} necklaces.

 The number of necklaces on $N$ beads which are both self-complementary and achiral have been enumerated \cite{Palmer}. Let $N=2^rm$ with $m$ odd, and let $\mydef{\mathcal A_{2N}}$ be the number of self-complementary achiral necklaces on $2N$ beads. Then
\begin{equation}
\label{Eq:Achiral}\mathcal A_{2N}=\sum_{i=-1}^{r-1}2^{\lceil 2^im \rceil -1}.
\end{equation}
\begin{lemma}
\label{Lemma:AchiralEven}
The number of self-complementary achiral necklaces on $2N$ beads is even for $N >1$.
\end{lemma}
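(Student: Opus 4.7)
The plan is to unpack formula \eqref{Eq:Achiral} term by term and track which summands can fail to be even. Since every summand is a power of two, a term $2^{\lceil 2^i m\rceil - 1}$ is odd precisely when the exponent equals zero, so the proof reduces to identifying exactly when this happens and showing the odd contributions always pair up.

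First I would simplify the exponents. Because $m$ is odd, the $i=-1$ term has $\lceil m/2\rceil - 1 = (m-1)/2$, while for $i \geq 0$ the quantity $2^i m$ is an integer, giving exponent $2^i m - 1$. The term at $i=-1$ is odd iff $m=1$, and the term at $i \geq 0$ is odd iff $i=0$ and $m=1$; in every other case the exponent is at least $1$ and the summand is even.

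Next I would case-split on $m$. If $m \geq 3$, then every exponent $(m-1)/2$ and $2^i m - 1$ ($i \geq 0$) is at least $1$, so every summand in \eqref{Eq:Achiral} is even and hence so is $\mathcal{A}_{2N}$. If $m = 1$, the hypothesis $N > 1$ forces $r \geq 1$, so the sum runs over $i \in \{-1, 0, 1, \ldots, r-1\}$; the two terms at $i=-1$ and $i=0$ both equal $2^0 = 1$, so they contribute $2$ to the total, while the remaining terms for $1 \leq i \leq r-1$ have exponent $2^i - 1 \geq 1$ and are therefore even. Summing, $\mathcal{A}_{2N}$ is even.

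There is no real obstacle here beyond bookkeeping; the only subtlety is recognizing that the hypothesis $N > 1$ is used precisely to rule out the degenerate case $m=1$, $r=0$ where the single odd summand $2^0 = 1$ would be left unpaired. Everything else is a direct parity check in formula \eqref{Eq:Achiral}.
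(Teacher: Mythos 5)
Your proof is correct and follows essentially the same route as the paper: split on whether $m\geq 3$ (every summand in \eqref{Eq:Achiral} is even) or $m=1$ (the $i=-1$ and $i=0$ terms each contribute $2^0$ and pair up, the rest are even, and $N>1$ guarantees $r\geq 1$ so both terms are present). Your explicit computation of the exponents is just a slightly more detailed version of the same bookkeeping.
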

\begin{proof}
If $m>1$ then $m$ must be at least three so each summand in Equation \eqref{Eq:Achiral} is divisible by two. If $m=1$ then $r\geq 1$ and we have  
$$\mathcal A_{2^r}=\sum_{i=-1}^{r-1} 2^{\lceil 2^i \rceil -1} = 2^0+2^0+\sum_{i=1}^{r-1}2^{\lceil 2^i \rceil -1}=2+\sum_{i=1}^{r-1}2^{\lceil 2^i \rceil -1}$$ which is also even. 
\end{proof}
As mentioned in the introduction, we are primarily concerned with the case where $n=2$ and $d_1=d_2$. We investigate the parity of $\mathcal N_{(d,d),1}$.
\begin{lemma}
\label{Lemma:EvenNumberOfNecklaces}
The number of $(d,d)$-necklaces is even for all $d>2$.
\end{lemma}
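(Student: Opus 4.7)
The plan is to use a parity argument driven by two commuting involutions on the set of $(d,d)$-necklaces: the color swap $c$ (exchanging white and black beads) and the reflection $r$. Both descend to well-defined maps on necklaces because both commute with cyclic rotation, and $c$ and $r$ commute with each other since reflecting a circular bead sequence does not affect its colors.

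First I would use $c$. Since $c$ is an involution on the set of $(d,d)$-necklaces, orbits have size one or two, so the cardinality satisfies
\[
\mathcal{M}_{(d,d)} \equiv |\{N : c(N) = N\}| \pmod 2,
\]
and the right-hand side is precisely the number of self-complementary $(d,d)$-necklaces. Next I would restrict the involution $r$ to the set of self-complementary necklaces; this is a well-defined involution because $c(r(N)) = r(c(N)) = r(N)$ whenever $c(N) = N$. The same parity argument now yields
\[
|\{N : c(N) = N\}| \equiv |\{N : c(N) = N \text{ and } r(N) = N\}| \pmod 2,
\]
and the fixed-point set on the right consists of $(d,d)$-necklaces that are simultaneously self-complementary and achiral, of which there are exactly $\mathcal{A}_{2d}$.

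Finally, invoking Lemma \ref{Lemma:AchiralEven} gives that $\mathcal{A}_{2d}$ is even for all $d > 1$, hence certainly for all $d > 2$, completing the proof that $\mathcal{M}_{(d,d)}$ is even.

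The only subtlety, and the main thing to verify carefully, is that the two involutions genuinely descend to the set of necklaces (equivalence classes modulo rotation) and commute there; after that the argument reduces to the standard fact that an involution on a finite set has the same parity as its fixed-point set, applied twice.
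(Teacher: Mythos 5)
Your proof is correct and follows essentially the same route as the paper: reduce the parity of $\mathcal{M}_{(d,d)}$ via the color-swap involution to the self-complementary necklaces, then via the reflection involution to the self-complementary achiral ones, and finish with Lemma \ref{Lemma:AchiralEven}. The only cosmetic difference is that the paper detours through the count $B_{2d}$ of all two-colored necklaces on $2d$ beads before applying reflection, whereas you apply the color swap directly to the $(d,d)$-necklaces and are more explicit about why the involutions descend to rotation classes and commute.
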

\begin{proof}
The sequence $\mydef{B_{2d}}$ is the number of necklaces with $2d$ beads on two colors without any conditions on the number of beads of each color. By the color swapping involution, the parity of the number of $(d,d)$-necklaces, $\mathcal M_{(d,d)}$, is the same as the parity of $B_{2d}$. By the reflection involution, the parity of $\mathcal M_{(d,d)}$ is the same as the parity of the number of self-complementary achiral necklaces on $2d$ beads which is even by Lemma \ref{Lemma:AchiralEven}.
\end{proof}
\begin{theorem}
\label{squarefreeodd}
The number of primitive $(d,d)$-necklaces is odd if and only if $d$ is squarefree.
\end{theorem}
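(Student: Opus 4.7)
The plan is to isolate $\mathcal N_{(d,d),1}$ from the total count $\mathcal M_{(d,d)}$ via Möbius inversion and then reduce modulo $2$, so that Lemma \ref{Lemma:EvenNumberOfNecklaces} annihilates all but one term. The parity question then reduces to asking when $\mu(d)$ is nonzero.

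First, using the observation (stated just before Lemma \ref{Lemma:Recursion}) that $\mathcal N_{(d,d),k} = \mathcal N_{(d/k,d/k),1}$, the identity $\mathcal M_{(d,d)} = \sum_{k \mid d} \mathcal N_{(d,d),k}$ rewrites as
$$\mathcal M_{(d,d)} \;=\; \sum_{e \mid d} \mathcal N_{(e,e),1}.$$
Möbius inversion over the divisor poset of $d$ then yields
$$\mathcal N_{(d,d),1} \;=\; \sum_{e \mid d} \mu(d/e)\, \mathcal M_{(e,e)}.$$

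Next, I would reduce this equation modulo $2$. By Lemma \ref{Lemma:EvenNumberOfNecklaces}, $\mathcal M_{(e,e)}$ is even for every $e > 2$. The two small cases are immediate: there is one $(1,1)$-necklace and (by an easy Burnside computation, or inspection) two $(2,2)$-necklaces, so $\mathcal M_{(1,1)} = 1$ and $\mathcal M_{(2,2)} = 2$. Hence $\mathcal M_{(e,e)}$ is even for every $e \geq 2$, and only the $e=1$ term survives mod $2$, giving
$$\mathcal N_{(d,d),1} \;\equiv\; \mu(d) \pmod 2.$$

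Since $\mu(d) \in \{-1,0,1\}$ is odd precisely when $\mu(d) \neq 0$, that is, precisely when $d$ is squarefree, both directions of the equivalence follow at once. I do not expect any substantial obstacle here: the Möbius inversion is routine, and Lemma \ref{Lemma:EvenNumberOfNecklaces} does the real work. The only minor subtlety is remembering to check the $e=2$ case by hand, since Lemma \ref{Lemma:EvenNumberOfNecklaces} is stated for $e>2$.
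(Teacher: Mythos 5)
Your proof is correct, and it takes a genuinely different and cleaner route than the paper. The paper also starts from the identity $\mathcal M_{(d,d)}=\sum_{e\mid d}\mathcal N_{(e,e),1}$ and relies on Lemma \ref{Lemma:EvenNumberOfNecklaces}, but instead of inverting it, the paper runs a two-directional induction on $d$: for squarefree $d$ it pairs each proper divisor $k$ with $d/k$ to show the middle sum is even, and for non-squarefree $d$ it separately shows (via the count $2^{\omega(d)}=\sum_{k\mid d}|\mu(k)|$ of squarefree divisors) that an odd number of odd terms appear, with $d=p^2$ as a base case. Your M\"obius inversion collapses all of this into the single congruence $\mathcal N_{(d,d),1}\equiv\mu(d)\pmod 2$, handling both directions at once and dispensing with the induction, the table-based base cases, and the divisor-pairing argument; in effect the paper's non-squarefree case re-derives by hand what the inversion formula packages automatically. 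The one point of care, which you correctly flag, is that Lemma \ref{Lemma:EvenNumberOfNecklaces} is stated only for $d>2$, so the $e=2$ term must be checked directly ($\mathcal M_{(2,2)}=2$), and the surviving $e=1$ term contributes $\mu(d)\cdot\mathcal M_{(1,1)}=\mu(d)$. Your version even yields slightly more information than the theorem asks for, namely the explicit parity formula $\mathcal N_{(d,d),1}\equiv|\mu(d)|\pmod 2$.
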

\begin{proof}
By Lemma \ref{Lemma:EvenNumberOfNecklaces}, the number $\mathcal M_{(d,d)}$ is even for $d>2$. 
We prove the result by induction. The result holds for $d<8$ by the diagonal of Table \ref{fig:aperiodicTable}. Suppose that $d$ is squarefree and $\mathcal N_{(k,k),1}$ is odd for all squarefree $k$ less than $d$. 
We write 
\begin{equation}
\label{Eq:Summing}
\mathcal M_{(d,d)}=\sum_{k \mid d} \mathcal N_{(k,k),1}=\mathcal N_{(d,d),1}+\left(\sum_{\overset{k \mid d}{1 \neq k \neq d}} \mathcal N_{(k,k),1} \right)+1.
\end{equation}
There are an even number of summands inside the parentheses since each divisor $k$ can be paired with the distinct divisor $\frac{d}{k}$  since $d$ is not a square. Moreover, each summand is odd by the induction hypothesis since $d$ being squarefree implies each divisor is squarefree. Thus, the sum inside the parenthesis is even. Since $\mathcal M_{(d,d)}$ is also even, Equation \eqref{Eq:Summing} implies that $\mathcal N_{(d,d),1}$ must be odd.

Conversely, suppose that $d$ is not squarefree. We will prove that $\mathcal N_{(d,d),1}$ is even by induction. Note that for $d=p^2$ with $p$ a prime, we have $\mathcal M_{(p^2,p^2)}=\mathcal N_{(p^2,p^2),1}+\mathcal N_{(p,p),1}+1$ and since $\mathcal N_{(p,p),1}$ is odd, and $\mathcal M_{(p^2,p^2)}$ is even, we see that $\mathcal N_{(p^2,p^2),1}$ must be even as well. This serves as our base of induction. 

Now, consider the case when $N$ is divisible by a square. Then we may write $$\mathcal M_{(d,d)}=\mathcal N_{(d,d),1}+\sum_{k \in K}\mathcal N_{(k,k),1}+\sum_{k \in K'}\mathcal N_{(k,k),1} +1$$
where $K$ is the set of all squarefree divisors of $d$ and $K'$ is the set of all non-squarefree divisors of $d$ other than $1$ and $d$ itself. 
We have by induction hypothesis that the summands in the right sum are even and thus do not alter the parity of $\mathcal N_{(d,d),1}$. So it is enough to prove that the number of summands in the left sum is odd. The number of squarefree divisors of $d$ is $$2^{\omega(d)}=\sum_{k \mid d}|\mu(k)|$$ where $\omega(d)$ is the number of distinct primes dividing $d$ and $\mu$ is the M\"obius function, so $$\sum_{\overset{k \mid d }{1 \neq k \neq d}} |\mu(k)|=2^{\omega(d)}-|\mu(d)|-|\mu(1)|=2^{\omega(d)}-1$$ which is odd.
\end{proof}

\end{section}
\begin{section}{Main Definitions and Results}
\label{Section:Main}
Given a hypersurface $\mathcal H \subseteq \mathbb{C}^n$, through ${\bf 0}$, we wish to find curves which osculate $\mathcal H$ optimally at $\0$. We restrict ourselves to curves which arise from $\d$-parametrizations. 
\begin{definition}
\label{Def:Dparam}
Fix $\d:=(d_1,\ldots,d_n) \in \mathbb{N}^n$.
A polynomial map $$\x(t): \mathbb{C} \to \mathbb{C}^n$$
$$t \mapsto (x_1(t),\ldots,x_n(t))$$
such that $x_i(t) \in \mathbb{C}[t]$ has degree $d_i$ and $\x(0)={\bf 0}$ is called a \mydef{$\d$-parametrization}. A $\d$-parametrization is said to be \mydef{$k$-fold} if it is generically $k$-to-one.
\end{definition}

We write a $\d$-parametrization $\x(t)$ in coordinates that describe the roots of $x_i(t)+1$, 
$$x_i(t)=\left(\prod_{j=1}^{d_i} (\alpha_{i,j}t+1)\right)-1,$$
and denote the space of $\d$-parametrizations by $\mathbb{C}_{\alpha}$. 

We define approximation order in the following algebraic way.
\begin{definition}
\label{Def:ApproxOrder}
Let $\mathcal H \subseteq \mathbb{C}^n$ be a hypersurface passing through ${\bf 0}$ given by the polynomial $$f=\sum_{I=(i_1,\ldots,i_n) \in \mathbb{N}^n} c_I x_1^{i_1}\cdots x_n^{i_n}\in \mathbb{C}[x_1,\ldots,x_n].$$ A $k$-fold $\d$-parametrization $\x(t)$ \mydef{approximates} $\mathcal H$ at ${\bf 0}$ to \mydef{order} $\gamma \in \mathbb N$ if 
\begin{equation}
\label{Equation:ApproximationOrder}
f(\x(t)) \equiv 0 \pmod {t^\gamma}.
\end{equation}
If $\gamma = |\d|:=\sum_{i=1}^n d_i$, then we say that the image $\x(\C)$ is a \mydef{$\frac{\d}{k}$-osculant} of $\mathcal H$.
\end{definition}
\begin{remark}
Because we are interested in counting the number of $\d$-osculants (geometric objects) of $\mathcal H$  rather than the number of $\d$-parametrizations approximating $\mathcal H$ to optimal order (algebraic objects), we must account for when two $\d$-parametrizations yield the same curve.
Two $\d$-parametrizations $\x(t),\hat{\x}(t)$ are said to be \mydef{reparametrizations} of one another if they have the same image. We remark that since $\x(0)=\hat{\x}(0)=0$ and both maps are $\d$-parametrizations, this implies that $\x(t)=\hat{\x}(\beta t)$ for some $\beta \in \C^*$
\end{remark}
Motivated by the definition of approximation order, we define $h_k$ to be the coefficient of $t^k$ in $f(\x(t))$. That is 
$$f(\x(t)) = \sum_{I \in \mathbb{N}^n} c_I\x(t)^I = \sum_{k=1}^\infty h_kt^k$$
so $h_k$ is regarded as a polynomial in   $\mathbb{C}[\c][\alpha]$.
Thus, the condition for $\x(t)$ to define a $\d$-parametrization meeting $\mathcal H$ to order $|\d|$ is given by the vanishing of the $|\d|-1$ polynomials $H_\d:=\{h_k\}_{k=1}^{|\d|-1}$. 
\begin{lemma}
\label{Lemma:EqHomog}
Fix $\d \in \mathbb{N}^n$. Then the polynomial $h_k\in\C[\c][\alpha]$ is bihomogeneous of degree $(1,k)$ in the $\c$ and $\alpha$ variables respectively.
\end{lemma}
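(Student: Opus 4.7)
The plan is to verify each homogeneity claim separately, and both follow from elementary scaling arguments on the defining formulas.

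For the $\mathbf{c}$-degree, I would observe that $f = \sum_I c_I x_1^{i_1}\cdots x_n^{i_n}$ is a $\mathbb{C}$-linear expression in the coefficients $c_I$. Substituting the $\mathbf{x}(t)$ for the variables $x_i$ does not alter this linearity: $f(\mathbf{x}(t)) = \sum_I c_I \mathbf{x}(t)^I$ is again linear in $\mathbf{c}$ (with coefficients in $\mathbb{C}[\alpha][[t]]$). Extracting the coefficient of $t^k$ commutes with this linear structure, so $h_k$ is homogeneous of degree $1$ in $\mathbf{c}$.

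For the $\alpha$-degree, the key observation is a scaling symmetry between the $\alpha_{i,j}$ and $t$. Namely, under the substitution $\alpha_{i,j} \mapsto \lambda \alpha_{i,j}$, the coordinate functions transform as
\[
\prod_{j=1}^{d_i}(\lambda \alpha_{i,j} t + 1) - 1 = x_i(\lambda t),
\]
so each $\mathbf{x}(t)$ becomes $\mathbf{x}(\lambda t)$, and hence $f(\mathbf{x}(t))$ becomes $f(\mathbf{x}(\lambda t))$. Comparing the two resulting power series in $t$ coefficient-by-coefficient gives
\[
h_k(\mathbf{c}, \lambda \alpha) = \lambda^k\, h_k(\mathbf{c}, \alpha),
\]
which is exactly the statement that $h_k$ is homogeneous of degree $k$ in $\alpha$.

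There is no real obstacle here; the only thing to be careful about is that the scaling identity $x_i|_{\alpha \mapsto \lambda \alpha}(t) = x_i(\lambda t)$ works precisely because $x_i(t) + 1$ is a product of linear factors $\alpha_{i,j} t + 1$ in which $\alpha_{i,j}$ and $t$ appear symmetrically as a product, which is why writing the parametrization in the $\alpha$-coordinates (rather than, say, the coefficients of $x_i(t)$) is convenient.
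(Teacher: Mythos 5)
Your proof is correct and follows essentially the same route as the paper: linearity in $\mathbf{c}$ is immediate from the form of $f$, and the key observation that rescaling every $\alpha_{i,j}$ by $\lambda$ is the same as the reparametrization $t\mapsto\lambda t$ yields the homogeneity in $\alpha$. If anything your version is slightly tighter, since the identity $h_k(\mathbf{c},\lambda\alpha)=\lambda^k h_k(\mathbf{c},\alpha)$ delivers the degree $k$ in one stroke, whereas the paper first infers homogeneity from invariance of the solution set under the rescaling and then determines the degree separately by noting that each factor of $t$ in $h_kt^k$ must be accompanied by an $\alpha$ variable.
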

\begin{proof}
Suppose that $f$ and $\x(t)$ satisfy Equation \eqref{Equation:ApproximationOrder}. The composition $f(\x(t))$ is a homogeneous linear form in the $c_I$. Note also, that the reparametrization $t \mapsto \beta t$ for some $\beta \in \mathbb{C}^*$ does not change whether or not Equation \eqref{Equation:ApproximationOrder} is satisfied, and that $t \mapsto \beta t$ is the same operation as $\alpha_{i,j} \mapsto \beta \alpha_{i,j}$. This shows that scaling the $\alpha_{i,j}$ does not change the solutions to $h_k$, so the $h_k$ are homogeneous in those variables as well. Finally, it is immediate that every factor of $t$ in the term $h_kt^k$ must come with a factor of some $\alpha$ variable and so $h_k$ is degree $k$ in the $\alpha$ variables.
\end{proof}

Lemma \ref{Lemma:EqHomog} implies that the incidence variety $V(H_\d)$ is a subvariety of the product of projective spaces $\mathbb{P}_\alpha \times \mathbb{P}_\c$. This agrees with the geometric intuition that the scaling of the equation of $\mathcal H$, or the particular parametrization of an osculant $\x(\C)$ should not change the approximation order. 

It will prove useful to embed $V(H_\d)$ into a larger projective space by keeping track of the leading coefficient of the product $\prod_{i=1}^n x_i(t)$. Namely, we write $$z^{|\d|} = \prod_{i=1}^n \prod_{j=1}^{d_i}\alpha_{i,j}.$$
This gives us the diagram 
\begin{align*}
V(H_\d) &\subseteq  \mathbb{P}_{\alpha,z} \times \mathbb{P}_\c \\
\bigg\downarrow \pi& \\
 \mathbb{P}_\c &
\end{align*}
and we are interested in the generic properties of the fibres of $\pi$.

We will begin by proving Theorem \ref{Thm:MainTheorem} for a particular parameter choice $\tilde{c}$ corresponding to the hypersurface $$\tilde{\mathcal H}:=V\left(\left(\prod_{i=1}^n (x_i+1)\right)-1\right).$$ For this fibre, Theorem \ref{Thm:MainTheorem} can be proven with an explicit bijection. We then argue that the properties of this specific fibre, such as cardinality, extend to almost all fibres.
\begin{lemma}
\label{Lemma:OurSpecialCase}
The fibre $\pi^{-1}(\tilde{\c})$ consists of $|\d|!$ simple solutions. 
\end{lemma}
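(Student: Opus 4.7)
The plan is to substitute the specific hypersurface $\tilde{\mathcal H}$ and exploit the fact that its defining polynomial factors through the product $\prod(x_i+1)$, which matches the shape of the reparametrized $\d$-parametrization. Since $x_i(t)+1 = \prod_{j=1}^{d_i}(\alpha_{i,j}t+1)$, the composition simplifies to
$$\tilde f(\x(t)) \;=\; \prod_{i=1}^{n}\prod_{j=1}^{d_i}(\alpha_{i,j}t+1)\;-\;1.$$
Relabeling the $|\d|$ parameters $\alpha_{i,j}$ as $\beta_1,\ldots,\beta_{|\d|}$, the vanishing of $h_1,\ldots,h_{|\d|-1}$ becomes the vanishing of the elementary symmetric polynomials $e_1(\beta),\ldots,e_{|\d|-1}(\beta)$. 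Equivalently, $\prod_k(1+\beta_k t) = 1 + e_{|\d|}(\beta)\,t^{|\d|}$, which forces the multiset $\{\beta_k\}$ to be the $|\d|$-th roots of some nonzero complex number, i.e. a geometric progression $\gamma,\gamma\zeta,\ldots,\gamma\zeta^{|\d|-1}$ for some $\gamma\in\C^*$ and $\zeta$ a primitive $|\d|$-th root of unity.

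Every solution in $\mathbb P_{\alpha,z}$ is therefore specified by a bijection $\phi$ from the positions $(i,j)$ to the exponents $\{0,1,\ldots,|\d|-1\}$ (setting $\alpha_{i,j}=\gamma\zeta^{\phi(i,j)}$), together with an $|\d|$-th root $z$ of $\prod\alpha_{i,j}=(-1)^{|\d|-1}\gamma^{|\d|}$. To count these projectively I would fix a representative of the scaling action, say by working in the affine chart $\alpha_{1,1}=1$, which pins $\gamma$ to $\zeta^{-\phi(1,1)}$; then each bijection $\phi$ determines $\gamma$, and for each such choice there remain $|\d|$ possibilities for $z$. Equivalently, one can enumerate the $|\d|!\cdot|\d|$ triples $(\phi,\gamma,z)$ arising from a chosen normalization of $\gamma$ and the $|\d|$ roots available for $z$, then quotient by the free $\mathbb Z/|\d|$-action that sends $(\phi,z)\mapsto(\phi+c,\zeta^c z)$ (which corresponds to replacing $\gamma$ by $\zeta^c\gamma$). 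In either accounting, the total number of projective solutions is $|\d|!$.

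Simplicity follows from a Jacobian computation. Using the identity $\prod_{i\neq m}(t-\alpha_i)=(t^{|\d|}-\gamma^{|\d|})/(t-\alpha_m)$ at a solution with $\alpha_m=\gamma\zeta^m$, one obtains
$$\frac{\partial e_k}{\partial \alpha_m} = e_{k-1}(\alpha\setminus\alpha_m) = (-\alpha_m)^{k-1},$$
so the $(|\d|-1)\times|\d|$ Jacobian of $(e_1,\ldots,e_{|\d|-1})$ in the $\alpha$-variables is a Vandermonde-type matrix in the distinct nonzero entries $-\alpha_m$, hence has full rank $|\d|-1$. In the affine chart above, one deletes the column corresponding to the normalized coordinate $\alpha_{1,1}$ and appends the row coming from $z^{|\d|}-\prod\alpha$, whose $\partial/\partial z=|\d|z^{|\d|-1}$ is nonzero; a block triangular decomposition then gives a nonzero determinant, so every solution is simple.

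The main obstacle is making the projective count rigorous: the scaling $\gamma\mapsto\lambda\gamma$ and the discrete $|\d|$-fold choice of $z$ interact nontrivially in $\mathbb P_{\alpha,z}$, and one must check that the apparent over-count from $|\d|!$ bijections times $|\d|$ values of $z$ is exactly cancelled by the cyclic scaling action relating different normalizations of $\gamma$. The algebraic content of the lemma is completely routine; the combinatorial bookkeeping of the projective quotient is where care is needed.
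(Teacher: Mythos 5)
Your reduction to elementary symmetric polynomials, and the conclusion that the $\alpha_{i,j}$ must form the full set of $|\d|$-th roots of a nonzero number, match the paper exactly; and your Vandermonde computation of the Jacobian is a correct, genuinely different route to simplicity. The paper gets simplicity for free: the system in $\mathbb{P}_{\alpha,z}$ consists of $|\d|$ equations of degrees $1,2,\ldots,|\d|$, so B\'ezout bounds the solution count by $|\d|!$, and exhibiting $|\d|!$ distinct solutions forces each to be simple with no derivative computation at all. Your Jacobian argument buys more (it locates exactly where nondegeneracy comes from, and would survive in settings where a B\'ezout count is not tight), but it is not needed here.

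The genuine gap is in the count itself, which you correctly flag, and it is created entirely by your choice of chart. Working in $\alpha_{1,1}=1$ leaves the overall scale $\gamma$ and the choice of root of $z^{|\d|}=\prod\alpha_{i,j}$ as separate data, and the $\mathbb{Z}/|\d|$-action you quotient by, $(\phi,z)\mapsto(\phi+c,\zeta^c z)$, is not the right identification: shifting $\phi$ by $c$ forces $\gamma$ to rescale by $\zeta^{-c}$ (to keep $\alpha_{1,1}=1$), which leaves every coordinate $\alpha_{i,j}$, and hence $\prod\alpha_{i,j}$, unchanged, so the correct identification fixes $z$ rather than multiplying it by $\zeta^c$. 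As written, your action identifies triples that are \emph{distinct} points of the fibre, and the total of $|\d|!$ comes out right only because both the correct identification and yours happen to be free of order $|\d|$. The paper sidesteps all of this by normalizing in the $z$-coordinate: if $z=0$ then $e_{|\d|}=\prod\alpha_{i,j}=0$, which together with $e_1=\cdots=e_{|\d|-1}=0$ forces every $\alpha_{i,j}=0$, so every projective solution lies in the chart $z=1$. There the condition reads $\prod_{i,j}(\alpha_{i,j}t+1)=1+t^{|\d|}$, the scale $\gamma$ is pinned completely, the solutions are exactly the $|\d|!$ assignments of the $|\d|$-th roots of $-1$ to the positions $(i,j)$, and no quotient bookkeeping is needed. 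Your argument is repaired simply by making the same choice of chart.
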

\begin{proof}
The equations coming from \eqref{Equation:ApproximationOrder} can be written explicitly for $\tilde{\mathcal H}$ as
$$\left(\prod_{i=1}^n (x_i(t)+1) \right)-1\equiv 0 \pmod {t^{|\d|}}$$
$$\left(\prod_{i=1}^n \left(\prod_{j=1}^{d_i} \alpha_{i,j}t+1 \right) \right) \equiv 1 \pmod {t^{|\d|}}$$
\begin{equation}
\label{Eq:SpecialFibre}
\left(\prod_{i=1}^n \left(\prod_{j=1}^{d_i} \alpha_{i,j}t+1 \right) \right) =1 + \left(z t\right)^{|\d|}
\end{equation}
This equality induces $|\d|$ homogeneous polynomial equations to be solved in the coordinates $(\alpha,z)$ of degrees $1,2,\ldots,|\d|$.

We note that $z$ cannot be zero, since otherwise all $\alpha_{i,j}$ are zero and there are no projective solutions to $|\d|$ equations in $|\d|+1$ variables. Because of this, all solutions live in the affine open chart $z \neq 0$ and so to count the number of projective solutions, we count the number of affine solutions with $z=1$. Our condition now becomes

\begin{equation}
\label{Eq:SpecialFibreNormalized}
\left(\prod_{i=1}^n \left(\prod_{j=1}^{d_i} \alpha_{i,j}t+1 \right) \right) =1 + t^{|\d|}
\end{equation}

Note that the roots of the univariate polynomial on the right hand side of Equation \eqref{Eq:SpecialFibre} are the $|\d|$-th roots of $-1$, so the roots on the left hand side must be the same. Thus, assigning the $|d|$-th roots of $-1$ to distinct $\alpha_{i,j}$ produces all solutions to Equation \eqref{Eq:SpecialFibreNormalized}. There are exactly $|\d|!$ distinct ways to do this so there are $|\d|!$ distinct solutions to Equation \eqref{Eq:SpecialFibreNormalized}, in particular, there are finitely many solutions. B\'ezout's Theorem gives $|\d|!$ as an upper bound for the number of solutions so each solution must be simple.
\end{proof}
\begin{theorem}
\label{Theorem:Bijection}
The $\d$-osculants of $\tilde{\mathcal H}$ are in bijection with primitive $\d$-necklaces.
\end{theorem}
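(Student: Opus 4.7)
The plan is to leverage Lemma \ref{Lemma:OurSpecialCase}, organizing the $|\d|!$ simple solutions as colorings of the set $R = \{\zeta\omega^k : 0 \le k < |\d|\}$ of $|\d|$-th roots of $-1$, where $\omega = e^{2\pi i/|\d|}$ and $\zeta$ is a fixed root. Then I would descend through two natural group actions to identify $\d$-osculants with primitive $\d$-necklaces.

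First, I would identify $\d$-parametrizations in $\pi^{-1}(\tilde\c)$ with colorings. Each solution of Equation \eqref{Eq:SpecialFibreNormalized} is a bijection between the label set $\{(i,j)\}$ and $R$. Since $x_i(t) = \prod_j(\alpha_{i,j}t+1) - 1$ is symmetric in the $\alpha_{i,j}$ for fixed $i$, two solutions determine the same parametrization exactly when they differ by a permutation in $S_{d_1}\times\cdots\times S_{d_n}$. Thus the $\d$-parametrizations in the fibre correspond bijectively to colorings $\phi: R \to \{1,\ldots,n\}$ with $|\phi^{-1}(i)| = d_i$, giving $\binom{|\d|}{\d}$ parametrizations.

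Second, I would quotient by reparametrization. Two parametrizations in the fibre share an image iff they are related by $t \mapsto \beta t$; for both to lie in $\pi^{-1}(\tilde\c)$, the scalar $\beta$ must preserve $R$ under multiplication, which forces $\beta \in \mu_{|\d|}$. The generator $\omega$ then acts on $\phi$ by cyclic rotation of the word $(\phi(\zeta), \phi(\zeta\omega), \ldots, \phi(\zeta\omega^{|\d|-1}))$, so the orbits are exactly $\d$-necklaces, and images of parametrizations in the fibre are in bijection with $\d$-necklaces.

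Finally, I would match the multiplicity of $\x$ with the periodicity of $\phi$: the coloring has cyclic period $|\d|/k$ iff each multiset $\{\alpha_{i,j}\}_j$ is invariant under multiplication by $\eta = \omega^{|\d|/k}$, iff $x_i(\eta t) = x_i(t)$ for all $i$, iff $\x(t) = g(t^k)$ for some polynomial map $g:\C \to \C^n$. Such a $g$ is itself a $\d/k$-parametrization, and minimality of the period $|\d|/k$ forces $g$ to be $1$-fold, so $\x$ is exactly $k$-fold. Hence primitive $\d$-necklaces correspond to $1$-fold $\d$-parametrizations modulo reparametrization, whose images are by Definition \ref{Def:ApproxOrder} the $\d$-osculants of $\tilde{\mathcal H}$. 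The main obstacle is this last equivalence: one must use unique factorization in $\C[t]$ to move between the root-multiset symmetry and the functional equation $\x(\eta t) = \x(t)$, and carefully use minimality of the period to ensure that the reduced parametrization $g$ is $1$-fold, so that $\x$ is exactly (not merely at least) $k$-fold.
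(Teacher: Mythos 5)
Your proposal is correct and follows essentially the same route as the paper's proof: identify solutions of the fibre with colorings of the $|\d|$-th roots of $-1$, quotient by $S_{d_1}\times\cdots\times S_{d_n}$ to get parametrizations, quotient by the rotation action $t\mapsto\omega t$ to get necklaces, and match $k$-fold parametrizations with $k$-fold necklaces. Your final step is in fact spelled out more carefully than in the paper (which asserts the correspondence between precomposition with $t\mapsto t^k$ and orbit size without the multiset/functional-equation argument), but it is the same argument in substance.
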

\begin{proof}
We have constructed all solutions to $\pi^{-1}(\tilde{\c})$ in the proof of Lemma \ref{Lemma:OurSpecialCase}. We now produce a bijection between $\d$-parametrizations coming from $\pi^{-1}(\tilde{\c})$ and circular arrangements of $d_i$ beads colored $i$. Then a bijection between images of these $\d$-parametrizations and $\d$-necklaces. Finally, a bijection between $\d$-osculants and primitive $\d$-necklaces.

First, we note that we may permute any $\alpha_{i,j}$ with $\alpha_{i,j'}$ since this only reorders the factors of $x_i(t)$.
Pick some solution $\hat \alpha$ from Lemma \ref{Lemma:OurSpecialCase}. Embed the $|\d|$-th roots of $-1$ into $\C$ and color such a point $i$ if it appears as $\hat \alpha_{i,j}$ for some $j$. Note now that this produces a bijection between $\d$-parametrizations meeting $\tilde{\mathcal H}$ to order $|\d|$ and circular arrangements of $|\d|$ roots of $-1$ (beads) with $d_i$ colored $i$.

Reparametrizing $\hat \alpha$ so that $z$ remains equal to $1$ corresponds to precomposing a parametrization with $t \mapsto \omega t$ for $\omega^{|\d|}=1$, or in other words, rotating the circular arrangement $\frac{2\pi}{|\d|}$ radians. This shows that the number of curves parametrized by our $\d$-parametrizations is equal to the number of circular arrangements of $d_i$ beads of color $i$, modulo cyclic rotation: $\d$-necklaces.

Finally, some necklaces do not parametrize a $\d$-osculant, but rather a $\frac{\d}{k}$-osculant. These parametrizations are those which appear as precompositions with $t \mapsto t^k$, or in other words, only have $\frac{|\d|}{k}$ distinct reparametrizations. Since reparametrization corresponds to cyclic rotaiton, these are the necklaces whose orbits have size $\frac{|\d|}{k}$. Therefore, the parametrizations which give $\d$-osculants are $1$-fold and are in bijection with the primitive necklaces.
\end{proof}

\begin{corollary}
\label{Cor:FinitelyManySolutions}
For generic $\hat \c \in \P_\c$, the fibre $\pi^{-1}(\hat{\c})$ is zero dimensional.
\end{corollary}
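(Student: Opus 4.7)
The plan is to deduce the corollary from Lemma \ref{Lemma:OurSpecialCase} via upper semi-continuity of fibre dimension, treating the specific hypersurface $\tilde{\mathcal H}$ as a witness that the generic fibre is zero-dimensional.

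First, I would observe that the morphism $\pi: V(H_\d) \to \mathbb{P}_\c$ is proper. Indeed, $V(H_\d)$ is a closed subvariety of the product $\mathbb{P}_{\alpha,z} \times \mathbb{P}_\c$ of projective varieties, and projection from such a product onto a factor is proper. In particular, $\pi$ sends closed subsets of $V(H_\d)$ to closed subsets of $\mathbb{P}_\c$.

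Next, let $B \subseteq V(H_\d)$ denote the locus of points $p$ at which the local fibre dimension $\dim_p \pi^{-1}(\pi(p))$ is at least $1$. Upper semi-continuity of fibre dimension guarantees that $B$ is Zariski closed in $V(H_\d)$. By Lemma \ref{Lemma:OurSpecialCase}, the fibre $\pi^{-1}(\tilde{\c})$ consists of $|\d|!$ isolated simple points, so no point of this fibre lies in $B$, which means $\tilde{\c} \notin \pi(B)$.

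Because $\pi$ is proper, $\pi(B)$ is closed in $\mathbb{P}_\c$, and since it misses $\tilde{\c}$ it is a proper closed subset. Therefore its complement $U := \mathbb{P}_\c \setminus \pi(B)$ is a nonempty Zariski open subset of $\mathbb{P}_\c$. For any $\hat{\c} \in U$, the fibre $\pi^{-1}(\hat{\c})$ is disjoint from $B$, so every one of its points has local fibre dimension $0$. Hence $\pi^{-1}(\hat{\c})$ is zero-dimensional (possibly empty), as claimed.

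The argument is short precisely because Lemma \ref{Lemma:OurSpecialCase} already supplies a concrete zero-dimensional fibre; there is no serious obstacle. The one subtlety worth flagging is that properness of $\pi$ is what upgrades the constructible set $\pi(B)$ (which is all that Chevalley's theorem would guarantee) into a closed set, ensuring its complement is genuinely Zariski open rather than only dense.
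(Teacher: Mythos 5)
Your argument is correct and is essentially the paper's: both exhibit the zero-dimensional fibre over $\tilde{\c}$ from Lemma \ref{Lemma:OurSpecialCase} and conclude by semi-continuity of fibre dimension (the paper cites Shafarevich directly for openness of $\{\hat{\c} \in \P_\c : \dim(\pi^{-1}(\hat{\c}))\leq r\}$, while you derive the same openness from closedness of the bad locus upstairs together with properness of $\pi$). The one point you leave open --- ``zero-dimensional (possibly empty)'' --- the paper closes by observing that no fibre is empty, being cut out by $|\d|$ homogeneous equations in the $|\d|+1$ variables $(\alpha,z)$, so the fibres over $U$ have dimension exactly $0$.
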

\begin{proof}
The set $C_r:=\{\hat{\c} \in \mathbb{P}_{\c} | \dim(\pi^{-1}(\hat \c))\leq r\}$ is Zariski open in $\P_\c$ (Ch 1. Sect. 6 Thm 7. \cite{shafarevich}) and so exhibiting one fibre, namely $\pi^{-1}(\tilde{c})$ whose dimension is zero implies that there is an open subset $U \subseteq \P_\c$ whose fibre dimension is zero or less. The fibre of any element in $\P_\c$ under $\pi$ is never empty because it corresponds to solving a system of $|\d|$ equations in $|\d|+1$ variables, and so every fibre of $u \in U$ has dimension $0$. 
\end{proof}
\begin{corollary}
For generic $\hat \c$, the fibre $\pi^{-1}(\hat{\c})$ consists of $|\d|!$ simple points.
\end{corollary}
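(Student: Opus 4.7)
The plan is to combine three ingredients already in hand: the zero-dimensionality result of Corollary \ref{Cor:FinitelyManySolutions}, the explicit count at the special fibre in Lemma \ref{Lemma:OurSpecialCase}, and the Bezout bound coming from Lemma \ref{Lemma:EqHomog}. First, by Corollary \ref{Cor:FinitelyManySolutions} there is a Zariski-open $U \subseteq \P_\c$ over which all fibres of $\pi$ are zero-dimensional. Because $V(H_\d)$ is closed in $\P_{\alpha,z}\times \P_\c$, the restriction $\pi|_{\pi^{-1}(U)}\colon \pi^{-1}(U)\to U$ is proper with finite fibres, hence finite. I would also note that $\tilde\c \in U$ by Lemma \ref{Lemma:OurSpecialCase}.

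Second, I would apply Bezout to bound the fibre. By Lemma \ref{Lemma:EqHomog} the polynomial $h_k$ is homogeneous of degree $k$ in the $\alpha$-variables, and together with the normalization $z^{|\d|}=\prod_{i,j}\alpha_{i,j}$ the fibre $\pi^{-1}(\hat\c)$ is cut out in $\P_{\alpha,z}$ by $|\d|$ hypersurfaces of degrees $1,2,\ldots,|\d|$. Whenever this intersection is zero-dimensional, Bezout's theorem yields
\[
\#\pi^{-1}(\hat\c) \;\leq\; \sum_{p\in\pi^{-1}(\hat\c)} m_p(\hat\c) \;\leq\; 1\cdot 2\cdots |\d| \;=\; |\d|!,
\]
where $m_p$ denotes local intersection multiplicity.

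Third, I would transfer the exact count from $\tilde\c$ to a generic $\hat\c$ by a semicontinuity argument. Lemma \ref{Lemma:OurSpecialCase} shows that the inequality above is an equality at $\hat\c = \tilde\c$ with every $m_p = 1$. For the finite morphism $\pi|_{\pi^{-1}(U)}$, the function $\hat\c \mapsto \#\pi^{-1}(\hat\c)$ is lower semi-continuous and the function $\hat\c \mapsto \sum_p m_p(\hat\c)$ is upper semi-continuous; both are bounded above by $|\d|!$ and both equal $|\d|!$ at $\tilde\c$. Thus the locus where some fibre multiplicity exceeds $1$ (equivalently, where two solutions coalesce) is the branch locus, a proper closed subvariety of $U$ not containing $\tilde\c$. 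On the dense open complement, each of the $|\d|!$ intersection points is simple.

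The main obstacle is the clean invocation of semicontinuity in step three; one must be sure the map is genuinely finite (not merely quasi-finite) in order to rule out solutions escaping to infinity, and one must know that the branch locus is closed. Finiteness is handled by properness of the ambient product of projective spaces together with Corollary \ref{Cor:FinitelyManySolutions}, and closedness of the branch locus is standard for finite morphisms; the conceptual content is simply that the Bezout bound is already saturated with simple points at one fibre, so generic fibres inherit the same behaviour.
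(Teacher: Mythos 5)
Your proof is correct and takes essentially the same route as the paper, whose entire argument is to cite Shafarevich (Ch.~2, Sect.~6, Thm.~4) for the openness of the locus where the fibre cardinality attains its maximum and Lemma \ref{Lemma:OurSpecialCase} for nonemptiness of that locus. Your extra step of reusing the B\'ezout bound $1\cdot 2\cdots|\d|=|\d|!$ to force every local multiplicity to equal one once $|\d|!$ distinct points are present is a worthwhile addition, since the paper's two-line proof establishes the cardinality but leaves the simplicity of the points implicit.
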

\begin{proof}
By Chapter 2 Section 6 Theorem 4 of \cite{shafarevich}, the set of points of $\pi$ which have fibres of cardinality $|\d|!$ is open. Lemma \ref{Lemma:OurSpecialCase} implies that it is not empty.
\end{proof}
\newpage
{\bf Proof of Theorem \ref{Thm:MainTheorem}}

Let $\hat \c$ be a generic parameter in $\P_\c$. Then $\pi^{-1}(\hat \c)$ consists of $|\d|!$ simple points each corresponding to a $\d$-parametrization. Two $\d$-parametrizations are the same if and only if their $\alpha$ coordinates are in the same orbit of the action by $S_{d_1}\times \cdots \times S_{d_n}$ since permuting the set $\{\alpha_{i,j}\}_{j=1}^{d_i}$ leaves $x_i(t)$ fixed. So there are only $\frac{|\d|!}{d_1!d_2!\cdots d_n!}=\binom{|\d|}{\d}$ distinct parametrizations.

Partitioning these distinct parametrizations into the sets $P_k$ containing those which are $k$-fold induces the equation
$$\binom{|\d|}{\d}=\sum_{k \mid \gcd{\d}} |P_k|.$$
Note that each parametrization $\x(t) \in P_1$ has $|\d|$ reparametrizations which fix $z=1$, namely $\{\x(\omega^it)\}_{i=1}^{|\d|}$ where $\omega^{|\d|}=1$. 
However, a $\d$-parametrization in $P_k$ must appear as a $\frac{\d}{k}$-parametrization precomposed with $t\mapsto t^k$ and so there are only $\frac{|\d|}{k}$ reparametrizations fixing $z=1$. Since
$$f(\x(t^k)) \equiv 0 \pmod {t^{|\d|}} \iff f(\x(t)) \equiv 0 \pmod {t^{\frac{|\d|}{k}}}$$ we see that the images of the parametrizations in $P_k$ are the $\frac{\d}{k}$-osculants.

Letting $N_{\d}$ equal the number of $\d$-osculants, we see that $|P_k|$ is equal to $N_{\frac{\d}{k}}$ times the number of reparametrizations of a $\frac{\d}{k}$-osculant, so

$$\binom{|\d|}{\d}=\sum_{k|\gcd{(\d)}} \frac{|\d|}{k} N_{\frac{|\d|}{k}}$$
which is the necklace recurrence in Lemma \ref{Lemma:Recursion}. \hfill $\square$

One desirable property of $\d$-osculants that we have not proven is whether or not all $\d$-osculants of a regular value $\hat \c \in \mathbb{C}_\c$ are smooth. The technique of considering the hypersurface $\tilde{\mathcal H}$ and arguing that this is generic behavior is unsuccessful because not all $\d$-osculants for $\tilde{\mathcal H}$ are smooth.  Under the bijection in Theorem \ref{Theorem:Bijection}, a singular $\d$-osculant corresponds to a primitive $\d$-necklace embedded into $\C$ via $|\d|$-th roots of $-1$ such that the sum of each subset of $|\d|$-th roots of $-1$ colored $i$ is zero: these are the linear terms of the $x_i(t)$ and $\x(\mathbb{C})$ is singular whenever the linear terms of each $x_i(t)$ are zero. There exists a primitive $(9,9)$-necklace with this property, and thus there exists a singular $(9,9)$-interpolant of $\tilde{\mathcal H}$. This necklace is depicted in Figure \ref{Fig:BalancedNecklace}.
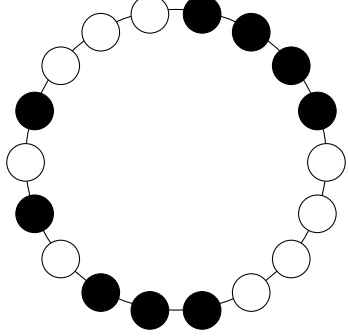
\begin{figure}[!htbp]
\caption{A primitive $(9,9)$-necklace corresponding to a singular $(9,9)$-interpolant of $\tilde{\mathcal H}$.}
  \begin{tikzpicture}
\draw[black] (1:2) arc (0:360:20mm);
    \node[state,fill=black,minimum size=0.5cm] at (360/18 * 1:2cm) {};
    \node[state,fill=black,minimum size=0.5cm] at (360/18 * 2:2cm) {};
    \node[state,fill=black,minimum size=0.5cm] at (360/18 * 3:2cm) {};
    
    \node[state,fill=black,minimum size=0.5cm] at (360/18 * 4:2cm) {};
    \node[state,fill=white,minimum size=0.5cm] at (360/18 * 5:2cm) {};
    \node[state,fill=white,minimum size=0.5cm] at (360/18 * 6:2cm) {};
    \node[state,fill=white,minimum size=0.5cm] at (360/18 * 7:2cm) {};
    \node[state,fill=black,minimum size=0.5cm] at (360/18 * 8:2cm) {};
    \node[state,fill=white,minimum size=0.5cm] at (360/18 * 9:2cm) {};
    \node[state,fill=black,minimum size=0.5cm] at (360/18 * 10:2cm) {};
    \node[state,fill=white,minimum size=0.5cm] at (360/18 * 11:2cm) {};
    \node[state,fill=black,minimum size=0.5cm] at (360/18 * 12:2cm) {};
    \node[state,fill=black,minimum size=0.5cm] at (360/18 * 13:2cm) {};
    \node[state,fill=black,minimum size=0.5cm] at (360/18 * 14:2cm) {};
    \node[state,fill=white,minimum size=0.5cm] at (360/18 * 15:2cm) {};
    \node[state,fill=white,minimum size=0.5cm] at (360/18 * 16:2cm) {};
    \node[state,fill=white,minimum size=0.5cm] at (360/18 * 17:2cm) {};
    \node[state,fill=white,minimum size=0.5 cm] at (360/18 * 18:2cm) {};

   \end{tikzpicture}
   \end{figure}
   \label{Fig:BalancedNecklace}
\end{section}
\begin{section}{Real solutions}
\label{Section:Real}
If there is an odd number of solutions to a polynomial system defined over $\R$, then there must be at least one real solution.
However, our solution count in Theorem \ref{Thm:MainTheorem} is not \emph{a priori} the count of a system of real polynomials. In fact, the only such solution count we have determined is that of $H_\d \cup \{z=1\}$ which has $|\d|!$ solutions. Therefore, this does not directly imply that when $\mathcal N_{\d}$ is odd a real solution must exist. However, we show that this does happen to be the case.
\begin{lemma}
\label{Lemma:ComeInConjugates}
The $k$-fold solutions of $\pi^{-1}(\hat \c)$ for a parameter $\hat \c \in \R_\c$ are fixed as a set under complex conjugation.
\end{lemma}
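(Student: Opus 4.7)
The plan is to verify (a) that the defining ideal cut out by $H_\d$ and the scaling equation $z^{|\d|}=\prod_{i,j}\alpha_{i,j}$ has all integer coefficients in the $(\alpha, z)$ variables once $\c$ is specialized to a real value, and then (b) that the condition of being $k$-fold is itself preserved under coordinatewise complex conjugation of the $\alpha_{i,j}$'s.

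For (a), recall that $h_k$ is defined as the coefficient of $t^k$ in the composition $f(\x(t))$, where $f=\sum_I c_I x^I$ and $x_i(t)=\prod_{j=1}^{d_i}(\alpha_{i,j}t+1)-1$. Expanding this composition shows each coefficient of $h_k\in\C[\c,\alpha]$ is an integer polynomial in the $\c$ and $\alpha$ variables; indeed, every monomial in $h_k$ is a product of a single $c_I$ with monomials in the $\alpha_{i,j}$, with integer sign coming from the binomial-style expansion of $(\prod_j(\alpha_{i,j}t+1))^{i_m}$. The same is true of $z^{|\d|}-\prod_{i,j}\alpha_{i,j}$. Consequently, when $\hat\c\in\R_{\c}$, the fiber $\pi^{-1}(\hat\c)$ is the zero locus of a polynomial system with real coefficients in $(\alpha, z)$, and is therefore set-theoretically invariant under coordinatewise complex conjugation.

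For (b), suppose a solution $(\hat\alpha,\hat z)\in\pi^{-1}(\hat\c)$ corresponds to a $k$-fold $\d$-parametrization $\x(t)$. By definition this means $\x(t)=\y(t^k)$ for some $\frac{\d}{k}$-parametrization $\y(t)$. Applying complex conjugation to the coefficients of both sides gives $\overline{\x}(t)=\overline{\y}(t^k)$, where $\overline{\x}$ and $\overline{\y}$ denote the parametrizations whose coefficients are the complex conjugates of those of $\x$ and $\y$. Since $\overline{\y}$ is still a $\frac{\d}{k}$-parametrization (of the same multidegree, with $\overline{\y}(0)=\0$), the parametrization $\overline{\x}$ is again $k$-fold. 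In the $\alpha$ coordinates this simply says: if the unordered multiset $\{\hat\alpha_{i,j}\}_j$ is invariant under multiplication by a primitive $k$-th root of unity for every $i$, so is the multiset of its complex conjugates. Therefore the subset of $k$-fold solutions in $\pi^{-1}(\hat\c)$ is preserved by complex conjugation, as claimed.

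There is no real obstacle beyond being careful with definitions; the only thing to check is that the algebraic formulation of ``$k$-fold'' (the factorization $\x(t)=\y(t^k)$, or equivalently multiset-invariance of the $\alpha_{i,j}$ under multiplication by $k$-th roots of unity) is a condition defined over $\Q$, hence automatically Galois-stable.
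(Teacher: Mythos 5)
Your proof is correct, but it takes a genuinely different route from the paper's. The paper proves the lemma by induction on the number of divisors of $\gcd(\d)$: the full fiber is conjugation-stable because it is cut out by real equations, the solutions that are $k$-fold for $k>1$ are conjugation-stable by the induction hypothesis (they arise from the smaller systems $H_{\d/k}$), and therefore the leftover $1$-fold solutions are stable as the complement. You instead prove both halves directly: (a) the same observation that the specialized system has real coefficients, so the whole fiber is conjugation-invariant, and (b) that the property of being $k$-fold is itself equivariant under coordinatewise conjugation, since $\x(t)=\y(t^k)$ conjugates to $\overline{\x}(t)=\overline{\y}(t^k)$ (equivalently, invariance of the multiset $\{\alpha_{i,j}\}_j$ under multiplication by $k$-th roots of unity is a condition defined over $\Q$). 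Your argument is the cleaner one: it avoids the induction and the complementation step entirely, and it establishes the statement for every $k$ simultaneously rather than deducing the $1$-fold case from the others. One small point worth making explicit: step (b) as stated shows that a $k$-fold solution conjugates to something that is \emph{at least} $k$-fold (i.e., factors through $t^k$); to get that it is \emph{exactly} $k$-fold, either note that conjugation is an involution (so it permutes the at-least-$k$-fold sets for all $k$, hence their differences), or observe directly that the cardinality of a generic fiber of $\overline{\x}$ equals that of $\x$ since the fiber of $\overline{\x}$ over $\overline{p}$ is the conjugate of the fiber of $\x$ over $p$. This is a one-line fix, not a gap.
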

\begin{proof}
We prove this using induction on the number of divisors of $\gcd{(\d)}$. For the base case, suppose that $\gcd{(\d)}=1$. Then all solutions must correspond to $1$-fold $\d$-parametrizations. Thus, the $|\d|!$ solutions are fixed under conjugation as they are solutions to a real system of polynomial equations.

Suppose now that $\gcd{(\d)}$ has proper divisors. The solutions to $H_{\d}\cup \{z=1\}$ are fixed under conjugation as a set because they are the solutions to a real polynomial system. Moreover, the set of $\d$-parametrizations which are not $1$-fold is fixed under conjugation by induction hypothesis, so the solutions left over (namely the $1$-fold $\d$-parametrizations) must be as well. 
\end{proof}
\begin{lemma}
\label{Lemma:OddImpliesReal}
If $\mathcal N_{\d}$ is odd, there is at least one real $\d$-osculant.
\end{lemma}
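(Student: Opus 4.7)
The plan is to realise complex conjugation as an involution $\tau$ on the set of $\d$-osculants whose fixed points are precisely the real ones, and then invoke the elementary fact that any involution on a set of odd cardinality has a fixed point.

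First I would fix a real parameter $\hat{\c}\in\R_{\c}$ and observe that the system $H_{\d}\cup\{z=1\}$ has real coefficients, so complex conjugation permutes its $|\d|!$ solutions. By Lemma \ref{Lemma:ComeInConjugates}, the subset corresponding to $1$-fold $\d$-parametrizations is conjugation-stable on its own. Next I would unfold the bijection from the proof of Theorem \ref{Thm:MainTheorem}: two $1$-fold solutions define the same $\d$-osculant iff their $\alpha$-coordinates differ by a reparametrization $\alpha_{i,j}\mapsto \omega\alpha_{i,j}$ with $\omega^{|\d|}=1$ followed by an element of $S_{d_1}\times\cdots\times S_{d_n}$ acting on $\{\alpha_{i,j}\}_{j=1}^{d_i}$. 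Both actions are defined over $\R$, so they commute with complex conjugation, and conjugation therefore descends to a well-defined involution $\tau$ on the $\mathcal{N}_{\d}$ equivalence classes.

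A class is $\tau$-fixed iff the image $\x(\C)\subseteq\C^n$ of a representative coincides with its complex conjugate as a set, which is exactly the condition that the osculant be real. Since $\tau$ is an involution its orbits have size $1$ or $2$, so
$$\#\{\text{real }\d\text{-osculants}\}\;\equiv\; \mathcal{N}_{\d}\pmod{2}.$$
If $\mathcal{N}_{\d}$ is odd, the right hand side is odd and the count of real $\d$-osculants is positive. The only point requiring care is the descent of conjugation to the orbit space, and this is automatic once one records that every group action used to pass from $1$-fold solutions to $\d$-osculants in Theorem \ref{Thm:MainTheorem} is $\R$-rational; I do not anticipate a serious obstacle.
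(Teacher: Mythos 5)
Your overall strategy is the same as the paper's: use Lemma \ref{Lemma:ComeInConjugates} to see that conjugation preserves the set of $1$-fold solutions, check that it descends to an involution $\tau$ on the $\mathcal N_{\d}$ osculant classes, and conclude from parity that $\tau$ has a fixed point. The step you dismiss as automatic, however, is not the descent to the orbit space but the claim that a $\tau$-fixed class ``is exactly'' a real osculant. What Rababah's conjecture asks for (and what the paper's computations certify) is a $\d$-parametrization with \emph{real coefficients}; a curve whose image is merely conjugation-stable as a subset of $\C^n$ does not obviously admit one, and supplying it is precisely where the paper's proof does its only nontrivial work. Concretely: if the class of $\alpha$ is $\tau$-fixed, then $\overline{\alpha}$ and $\alpha$ differ by a relabeling composed with a reparametrization $\alpha\mapsto\omega^{k}\alpha$, $\omega^{|\d|}=1$. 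If the reparametrization is trivial, the multiset of roots of each $x_i(t)+1$ is conjugation-stable and $\x(t)$ is already real; otherwise one must pass to the representative $\omega^{k/2}\alpha$, for which $\overline{\omega^{k/2}\alpha}=\omega^{-k/2}\overline{\alpha}=\omega^{-k/2}\omega^{k}\alpha=\omega^{k/2}\alpha$ (up to relabeling), so that the reparametrized map $\x(\omega^{k/2}t)$ has real coefficients. Without this half-angle step your fixed point only yields a curve stable under conjugation, not a real $\d$-parametrization, so you should add it.

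A second, minor, imprecision: the reparametrization action is by $|\d|$-th roots of unity, which is not ``defined over $\R$'' and does not commute with conjugation elementwise, since $\overline{\omega\alpha}=\overline{\omega}\,\overline{\alpha}\neq\omega\overline{\alpha}$ in general. What makes the descent work is that conjugation normalizes the group $\mu_{|\d|}\times S_{d_1}\times\cdots\times S_{d_n}$ and therefore maps orbits to orbits. With these two points repaired, your argument coincides with the paper's.
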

\begin{proof}
Partitioning the $|\d|!$ solutions to $H_\d \cup \{z=1\}$ into sets determined by whether or not they are $k$-fold gives the recursion
$$|\d|! = \sum_{k | \gcd{\d}} \left(\prod_{i=1}^n d_i! \right) \frac{|\d|}{k} \mathcal N_{\d/k}$$
By Lemma \ref{Lemma:ComeInConjugates}, we know that the $\left(\prod_{i=1}^{n}d_i! \right) |\d| \mathcal N_{\d}$ $1$-fold $\d$-parametrizations are fixed under complex conjugation as a set.

Recall that the factor of $\left(\prod_{i=1}^{n}d_i! \right) |\d|$ occurs because for each $\d$-osculant, there are $|\d|$ reparametrizations, and $\prod_{i=1}^n d_i!$ ways to relabel the roots of $x_i(t)$. Let $p_1,\ldots,p_{\mathcal N_\d}$ be all $\d$-osculants and let $S_i$ denote the class of  all solutions which correspond to $p_i$. 

Let $\alpha \in S_i$ and consider a relabeling of the roots via $(\sigma_1,\ldots,\sigma_n) \in S_{d_1} \times \cdots \times S_{d_n}$ so that $\alpha_{i,j} \mapsto \alpha_{i,\sigma_i(j)}$. If such a relableing induces complex conjugation (if  $\sigma(\alpha)= \overline{\alpha}$) then the roots of all $x_i(t)$ are fixed under conjugation and thus $\alpha$ corresponds to a real $\d$-osculant. If any reparametrization $\alpha \mapsto \omega^k\alpha$ for $\omega^{|\d|}=1$ induces complex conjugation, then 
\begin{align*}
\omega^k \alpha &= \overline{\alpha} \\
\implies \overline{ \omega^{k/2} \alpha} = \omega^{-k/2} \overline{ \alpha} &= \omega^{-k/2}\omega^{k} \alpha = \omega^{k/2}\alpha
\end{align*} 
and so the reparametrization $ \omega^{k/2}\alpha$ is fixed under complex conjugation, so $p_i$ must be real.

Therefore, if $\mathcal N_\d$ is odd, then either (1) there is a real solution, or (2) none of the $\left(\prod_{i=1}^n d_i!\right) |\d|$ solutions corresponding to $p_i$ are conjugates of one another. Therefore, the classes $S_i$ must be conjugates of eachother set-wise. But that is a contradiction, since there are an odd number of classes.
\end{proof}

\begin{theorem}
\label{Thm:RealSquareFree}
Let $\mathcal C$ be a generic curve in the plane defined by a real polynomial. For any squarefree integer $d$ there exists at least one real $(d,d)$-interpolant. 
\end{theorem}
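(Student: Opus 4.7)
The proof is essentially a direct combination of three results already established in the paper, so the plan is short. First, I would apply Theorem \ref{squarefreeodd}, which tells us that the number of primitive $(d,d)$-necklaces $\mathcal N_{(d,d),1}$ is odd precisely when $d$ is squarefree. Next, I would invoke Corollary \ref{Cor:PlaneResult} to reinterpret this as a count: for a generic plane curve $\mathcal C$, the number of $(d,d)$-interpolants equals $\mathcal N_{(d,d),1}$, hence is odd.

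At this point the hypothesis of Lemma \ref{Lemma:OddImpliesReal} is satisfied (with $\d=(d,d)$), so the lemma produces at least one real $(d,d)$-osculant, i.e., a real $(d,d)$-interpolant. The only subtlety worth checking in the writeup is that the genericity needed for Corollary \ref{Cor:PlaneResult} is compatible with the reality hypothesis on $\mathcal C$: since the generic condition defines a Zariski open subset of $\P_\c$, its complement has real codimension at least two, so a generic real parameter $\c\in\R_\c$ can be chosen to lie in it. With that observation the proof is immediate.

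I expect no genuine obstacle; the proposal essentially reads: \emph{By Theorem \ref{squarefreeodd}, $\mathcal N_{(d,d),1}$ is odd. By Corollary \ref{Cor:PlaneResult}, this is the number of $(d,d)$-interpolants of $\mathcal C$. By Lemma \ref{Lemma:OddImpliesReal}, an odd count over a real parameter forces at least one real $(d,d)$-interpolant.} The only mildly nontrivial ingredient is the earlier Lemma \ref{Lemma:OddImpliesReal}, whose conjugation argument on reparametrization orbits has already been carried out, so nothing further is needed here.
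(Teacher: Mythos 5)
Your proposal is correct and follows exactly the paper's own route: the paper's proof is the one-line combination of Theorem \ref{squarefreeodd} and Lemma \ref{Lemma:OddImpliesReal} (with Corollary \ref{Cor:PlaneResult} implicit). Your extra remark about choosing a generic real parameter inside the Zariski-open good locus is a reasonable point to make explicit, though the complement's real locus is merely a proper real subvariety (codimension at least one suffices), not necessarily of real codimension two.
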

\begin{proof}
This follows directly from Lemma \ref{Lemma:OddImpliesReal} and Theorem \ref{squarefreeodd}.
\end{proof}
\end{section}
\begin{section}{Computations}
\label{Section:Computation}
\mydef{Homotopy continuation}, a tool in numerical algebraic geometry, provides an extremely quick way to produce solutions to a particular polynomial system when solutions to a similar system have been precomputed. Briefly, the method constructs a homotopy from the polynomial system whose solutions are known (called the start system) to the target polynomial system, whose solutions are desired. Then the start solutions are tracked using predictor-corrector methods toward the target solutions. 

Since we have an explicit description of all $\d$-osculants of $\tilde{H}$ given by necklaces, this method is perfectly suited for the problem of computing $\d$-osculants for a generic $\mathcal H$. We outline the process in Algorithm \ref{Alg:homotopycontinuation}.

{
\begin{algorithm}(Finding all $\d$-osculants)

\fbox{

\label{Alg:homotopycontinuation}
\begin{tabular}{l}
{\bf Input:} $\d \in \mathbb{N}^n$, $\mathcal I \subseteq \mathbb{N}^n, \{c_I\}_{I \in \mathcal I} \subseteq \mathbb{C}$\\
{\bf Output:} All $\d$-osculants of $V(f)$ where $f=\sum\limits_{I \in \mathcal I} c_Ix^I$. 
\rule{0 pt}{0.3 cm} \\
\hline 
\hline
\rule{0 pt}{0.7 cm}
1) Compute all primitive $\d$-necklaces via set partitions of $\{1,\ldots,|\d|\}$ \\
so that $b_{i,j}$ is the $j$-th element of the $i$-th part of the set partition.\\

2) For each primitive $\d$-necklace, compute $\x(t)=(x_1(t),\ldots,x_n(t))$ where \\

$x_i(t) = (-1)+\prod_{j=1}^{d_i} (\omega^{b_{i,j}}t+1).$\\

3) Set the starting parameters to be those coming from $\tilde{\mathcal H}$. \\

4) Set the starting points to be the solutions computed in Step 2.\\

5) Track the solutions of the equations given by \eqref{Equation:ApproximationOrder} by \\ varying the parameters $c_I$ from  those corresponding to $\tilde{\mathcal H}$ towards those \\corresponding to $f$.\\

7) Return the solutions given by the homotopy.

\end{tabular}}
\end{algorithm}
}

The software {\bf alphaCertified} can certify if a solution is real or not and relies on Smale's $\alpha$-theory \cite{alphaCertify, smale}.
Algorithm \ref{Alg:homotopycontinuation} computes $\d$-osculants in the $\alpha$ variables and so solutions which correspond to real osculants probably do not have real coordinates. Therefore, to certify that solutions are real, we expand the expressions $$x_i(t) = -1+\prod_{j=1}^{d_i} (\alpha_{i,j} t+1)=\sum_{j=1}^{d_i} a_{i,j}t^i$$ and normalize so that $a_{1,1}=1$. Even though we have not proven that $a_{1,1}$ is generically nonzero, we have only seen this behavior in the computational experiments. After this normalization, real interpolants do correspond to solutions with real coordinates and we can use {\bf alphaCertified} to certify the number of real solutions. 

We implemented Algorithm \ref{Alg:homotopycontinuation} in {\bf Macaulay 2} using the {\bf Bertini.m2} package to call the numerical software {\bf Bertini} for the homotopy continuation \cite{BHSW06,M2,BertiniSource}. Using the implementation, we computed many instances of the problem of finding $(d_1,d_2)$-interpolants and we tally the number of real solutions for the problems in Table \ref{Tab:Real} where the row labeled $k$ indicates that $\mathcal{N}_{(d_1,d_2)}\pmod{2}+2k$ real solutions were found. The current certified results can be found on the author's webpage \cite{taylor}.

As one can see from Table \ref{Tab:Real} that when $d_1=d_2$ it seems that Rababah's conjecture holds. Moreover, in the case of $(4,4)$ and $(5,5)$ there seem to be nontrivial upper bounds to the number of real solutions, namely $6$ and $15$ respectively.

\begin{table}[!htbp]
\begin{center}
\resizebox{\columnwidth}{!}{%
\begin{tabular}{c r c c c c c c c c c c}
({$d_1$},{$d_2$})  &\vline & (2,3) & (2,4) & (2,5) & (3,3) & (3,4) & (3,5) & (4,4) & (4,5) & (5,5) \\
\hline{$\mathcal N_{(d_1,d_2)}$} &\vline & 2 & 2 & 3 & 3 & 5 & 7 &  8 & 14 & 25&  \\
\hline
&\vline &&&&&&&&&
\\
{\multirow{3}{*}{\rotatebox[origin=c]{90}{$\text{row}=\frac{\#\text{real sols} -\mathcal{N}_{(d_1,d_2)}\pmod{2}}{2}$}}}&0  \vline & 84247& 102629 & 195490&414314 & 414314& 1925&0 & 73&0 &  \\
 &\vline &&&&&&&&&
\\
&1 \vline & 486533& 432605& 313559 &39985 & 405142 & 300265&125841 &6344 &138 &  \\
&\vline &&&&&&&&&
\\
&2 \vline &       &  & - & - &30358 &71383 &336261  &38692 &15795&  \\
&\vline &&&&&&&&&
\\
&3 \vline &- & -&  &  &  & 12072& 62&102139 &16309 &  \\
&\vline &&&&&&&&&
\\
& 4 \vline &- & -&- &- & - &- &0 &15517  &3182&  \\
&\vline &&&&&&&&&
\\
&5 \vline &- & -&- &- &  & & -& 19&102 &  \\
&\vline &&&&&&&&&
\\
&6 \vline &- & -&- &- & -& -& & 0 &3&  \\
&\vline &&&&&&&&&
\\
&7 \vline &- & -&- &- & -& &- & -& 5&  \\
\end{tabular}
}
\end{center}
\caption{Results of Computational Experiments}
\label{Tab:Real}
\end{table}

\begin{example}[A curve with six real $(4,4)$-interpolants]
\label{Ex:Sixreal}
Consider the curve defined by \begin{align*}
f(x,y)=&\left(-586971\right)x+\left(-481753\right)x^2+\left(114414\right)x^3+\left(-361929\right)x^4+\\
&\left(152011\right)x^5+\left(-616310\right)x^6+\left(244262\right)x^7-1000000y.
\end{align*}
The eight $(4,4)$-interpolants are given (approximately) by
{
\begin{align*}
s_1:x(t)&=(.166+1.601i)t^4+(.028-.204i)t^3+(-.113-1.053i)t^2+t,\\y(t)&= (.003-1.219i)t^4+(.207+1.134i)t^3+(-.415+.618i)t^2-.587t\\
s_2:x(t)&=      (.166-1.601i)t^4+(.028+.204i)t^3+(-.113+1.053i)t^2+t, \\
y(t)&= (.003+1.219i)t^4+(.207-1.134i)t^3+(-.415-.618i)t^2-.587t\\
s_3:x(t)&= .031t^4-.537t^3-.065t^2+t,\\
y(t)&= .113t^4+.492t^3-.444t^2-.587t\\
s_4:x(t)&= -9.902t^4+4.516t^3+2.234t^2+t,\\
y(t)&= -.538t^4-4.689t^3-1.793t^2-.587t\\
s_5:x(t)&=-.347t^4-.787t^3+.388t^2+t,\\
y(t)&=       .661t^4+.203t^3-.709t^2-.587t\\
s_6:x(t)&= 8.902t^4+2.333t^3-1.772t^2+t, \\
y(t)&=     -9.956t^4+.452t^3+.558t^2-.587t\\
 s_7:x(t)&=.162t^4-.799t^3-.349t^2+t,\\
 y(t)&= .134t^4+.92t^3-.277t^2-.587t\\
s_8:x(t)&= -.613t^4-2.228t^3+.031t^2+t,\\
y(t)&= 2.155t^4+1.392t^3-.5t^2-.587t.\\
\end{align*} 
}

\begin{figure}
\caption{Six real $(4,4)$-interpolants}
\includegraphics[scale=0.5]{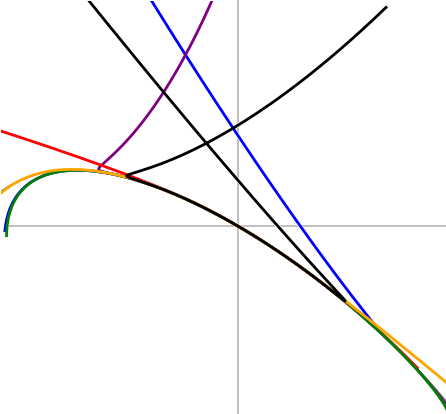}
\label{SixReal}
\end{figure}
Here, $s_3,\ldots,s_8$ define real curves. Figure \ref{SixReal} plots their branches near $t=0$.

\end{example}
\end{section}
\newpage
\bibliographystyle{plain}
\bibliography{ref}

\end{document}